\documentclass[a4paper,12pt,tbtags]{amsart}

\usepackage[T1]{fontenc}
\usepackage[utf8]{inputenc}
\usepackage{amsfonts,amssymb}
\usepackage{mathrsfs}
\usepackage{graphicx}
\usepackage{hyperref}

\theoremstyle{plain}
\newtheorem{theorem}{Theorem}
\newtheorem{proposition}[theorem]{Proposition}
\newtheorem{lemma}[theorem]{Lemma}

\theoremstyle{definition}
\newtheorem{definition}{Definition}

\numberwithin{equation}{section}

\begin{document}

\title{Generically Flexible Varieties with an Invariant Divisor}
\author{Alina Nikitina}
\address{Faculty of Computer Science, HSE University, Pokrovsky Boulevard 11, Moscow, 109028 Russia}
\email{avnikitina@hse.ru}

\subjclass[2020]{Primary 14R20; Secondary 14J50, 13A50}
\keywords{affine variety, flexible variety, generically flexible variety, special automorphism, invariant divisor}

\begin{abstract}
In this paper, we construct a series of smooth affine varieties of dimension $n \geqslant 3$ that are generically flexible but not flexible. Moreover, these varieties admit a nontrivial divisor invariant under the subgroup of special automorphisms.
\end{abstract}

\maketitle

\section{Introduction}\label{sec1}

Let $\mathbb{K}$ be an algebraically closed field of characteristic $0$, and let $\mathbb {G}_a$ denote the additive group over $\mathbb{K}$. For an irreducible affine variety $X$, denote by $\operatorname{Aut}(X)$ the group of all its regular automorphisms, and by $\operatorname{SAut}(X)$ the subgroup generated by all $\mathbb {G}_a$-subgroups. 

Denote by $X^{\mathrm{reg}}$ the set of smooth points of $X$. A point $x \in X^{\mathrm{reg}}$ is called \textit{flexible} if the tangent space $T_xX$ is spanned by the tangent vectors to the orbits of $\mathbb {G}_a$-actions. The variety $X$ is called \textit{flexible} if all its smooth points are flexible. Flexibility of affine varieties is related to the action of the group $\operatorname{SAut}(X)$. For an irreducible affine variety $X$ of dimension at least $2$, flexibility is equivalent to transitivity of the action of $\operatorname{SAut}(X)$ on $X^{\mathrm{reg}}$, and, moreover, it is equivalent to infinite transitivity of this action \cite{5A}.

The flexibility property has been actively studied for various classes of affine varieties; see, for example, \cite{AZK}--\cite{ShafarevichFlexibility}. These results either establish flexibility for specific varieties or, conversely, give examples that are not flexible.

Consider a weaker property. We say that $X$ is \textit{generically flexible} if there exists a nonempty open subset consisting of flexible points. Equivalently, this condition is the same as the existence of at least one flexible point on $X$ \cite{5A}. Unlike flexibility, generic flexibility allows for nontrivial $\operatorname{SAut}(X)$-invariant subsets in the set of smooth points.

In \cite{Kovalenko}, a series of smooth Gizatullin surfaces $X$ is constructed that are generically flexible but not flexible: each of them is the union of an open $\operatorname{SAut}(X)$-orbit and a finite set of $\operatorname{SAut}(X)$-fixed points. Thus, in these examples the complement of the open orbit is finite.

In \cite{GDivisor}, for each dimension $m \geqslant 4$, a normal affine variety $X$ is constructed on which the group $\operatorname{SAut}(X)$ has an open orbit $\mathcal O$, and the complement $X \setminus \mathcal O$ has codimension one. Like the previous examples, these varieties are generically flexible but not flexible. In this case the complement of the open orbit is not finite, but forms a divisor. At the same time, the varieties constructed in \cite{GDivisor} are not smooth. It is therefore natural to consider the analogous problem in the smooth case.

In this paper we construct a series of smooth affine varieties that are generically flexible but not flexible and, moreover, admit a nontrivial $\operatorname{SAut}(X)$-invariant divisor. The main result of the paper is stated in Theorem \ref{mainthm}.

\section{Preliminaries}
\label{subsec2}

In this section we introduce the main notions used in the paper: locally nilpotent derivations and the Makar-Limanov invariant, factoriality, Weil divisors, and, finally, Cox rings, which in Section~\ref{subsec3} serve as a bridge between the variety $X$ and its torus quotient $Y=X/\!\!/\mathbb G_m$: with their help, $\mathbb G_m$-invariant locally nilpotent derivations on $X$ are transferred to $Y$, which lets us establish generic flexibility of $Y$ and construct an invariant divisor on it.

Let $\mathbb{K}$ be an algebraically closed field of characteristic $0$, let $A$ be a finitely generated algebra over $\mathbb{K}$, and let $X=\mathrm{Spec}\,A$.

\begin{definition}
A derivation of the algebra $A$ is a linear map $D\colon A\to A$ satisfying the Leibniz rule for all $a,b\in A$:
$$D(ab)=aD(b)+bD(a).$$
\end{definition}

\begin{definition}
A derivation $D\colon A\to A$ is called locally nilpotent if for every $a\in A$ there exists $n\in\mathbb{N}$ such that $D^n(a)=0$.
\end{definition}

Denote the set of all such derivations by $\operatorname{LND}(A)$.

\begin{definition}
The kernel of a derivation $D$ is
$$\operatorname{Ker}D=\{f\in A\mid D(f)=0\}.$$
\end{definition}

Let $G$ be an abelian group and let $A=\bigoplus_{h\in G}A_h$ be a $G$-grading of the algebra $A$.

\begin{definition}
A derivation $D$ is called homogeneous of degree $g\in G$ if for every homogeneous element $a\in A_h$ we have
$$D(a)\in A_{h+g}.$$
\end{definition}

\begin{definition}
The Makar-Limanov invariant of the algebra $A$ is the subalgebra
$$\operatorname{ML}(A)=\!\!\!\!\!\!\bigcap_{D\in\operatorname{LND}(A)}\!\!\!\!\!\!\operatorname{Ker}D.$$
\end{definition}
For a variety $X$ we set $\operatorname{ML}(X)=\operatorname{ML}(\mathbb{K}[X])$.

If an algebraic torus $T$ acts on an affine variety $X$, then the categorical quotient by this action is
$$\pi\colon X\to X/\!\!/T:=\mathrm{Spec}\,A^T,$$
where $A^T$ is the subalgebra of $T$-invariants in $A=\mathbb K[X]$. If the action of the torus $T$ corresponds to the grading of $A$ by the character group $\mathfrak{X}(T)$, then $A^T=A_0$ is the zero graded component. In Section~\ref{subsec3} the notation $\pi$ is used precisely for this quotient map when $T=\mathbb G_m$.

\subsection{Cox rings}
\label{subsec2.1}

Let $X$ be an irreducible normal algebraic variety over an algebraically closed field $\mathbb{K}$, with trivial group of invertible regular functions and finitely generated divisor class group $\operatorname{Cl}(X)$. Denote by $\operatorname{WDiv}(X)$ the group of Weil divisors on $X$.

Fix a subgroup $K\subset \operatorname{WDiv}(X)$ that maps onto $\operatorname{Cl}(X)$ surjectively. More details can be found in the book \cite{CoxB}. Then the algebra of divisorial sections is defined as
$$T_K(X)=\bigoplus_{\delta\in K}\mathcal{O}(X,\delta),\qquad 
\mathcal{O}(X,\delta)=\{f\in \mathbb{K}(X)\mid \operatorname{div}(f)+\delta\geqslant 0\}.$$

Let $K^0$ denote the kernel of the map $K\to \operatorname{Cl}(X)$, and choose a system of rational functions $\mathcal{F}=\{F_\delta\in\mathbb{K}(X)^\times \mid \delta\in K^0\}$, satisfying $\operatorname{div}(F_\delta)=\delta$ and $F_{\delta+\delta'}=F_\delta F_{\delta'}$.

\begin{definition}
The Cox ring of the variety $X$ is the quotient ring
$$R(X)=T_K(X)\big/ I(K,\mathcal{F}),$$
where $I(K,\mathcal{F})$ is the ideal generated by elements of the form
$$f - F_{\delta_1-\delta_2}f,\qquad f\in \mathcal{O}(X,\delta_1),\; \delta_1,\delta_2\in K,\;\delta_1-\delta_2\in K^0.$$
\end{definition}

The ring $R(X)$ does not depend on the particular choice of $K$ and $\mathcal F$, up to isomorphism \cite[Proposition 3.2]{ArzFact}, and it is graded by the group $\operatorname{Cl}(X)$.

Let $M$ be a finitely generated abelian group. Consider an algebra $$R={\bigoplus\limits_{u\in M}}R_u,$$ graded by the group $M$.

\begin{definition}
We say that a nonzero homogeneous element $a\in R\setminus R^\times$ is $M$-irreducible if for any homogeneous $b,c$ with $a=bc$, either $b$ or $c$ is invertible.
\end{definition}

\begin{definition}
An algebra $R$ graded by the group $M$ is called factorially graded if it has no nonconstant homogeneous invertible elements, and every nonconstant homogeneous element decomposes as a product of $M$-irreducible elements, and this decomposition is unique up to permutation and association of the factors.
\end{definition}

Consider a quasitorus $N$ with a fixed isomorphism $\mathfrak X(N)\cong\operatorname{Cl}(X)$. It acts on $R(X)$ by homogeneous automorphisms, and the subalgebra of invariants coincides with the zero component: $R(X)^N=R(X)_0=\mathbb K[X]$.

Assume further that $R(X)$ is finitely generated. Consider $\overline X:=\operatorname{Spec}R(X)$ with the induced regular action of $N$. Since $X\cong\operatorname{Spec}R(X)^N$, the variety $X$ is realized as the categorical quotient $\overline X/\!\!/N$; the quotient morphism
$$q\colon\overline X\to X$$
is called the \textit{Cox realization} of the variety $X$. It is known that $\overline X$ is irreducible and normal \cite[Prop.~2.2]{AG}. \\

\noindent{\sc Proposition}  (2.3, \cite{AG}).\label{prop2.3}
Let $q\colon\overline X\to X$ be the Cox realization of an affine variety $X$. Let a quasitorus $Q$ act on an irreducible affine variety $Z$, so that $Z$ is factorially graded by this action. Let $\pi\colon Z\to Z/\!\!/Q$ be the quotient morphism, and fix an isomorphism $\varphi\colon X\to Z/\!\!/Q$. Then the following conditions are equivalent:
\begin{enumerate}
    
    \item[(i)] there exists an open $Q$-invariant subset $U\subseteq Z$ such that $\operatorname{codim}_Z(Z\setminus U)\geqslant 2$, the action of $Q$ on $U$ is free, and every fiber of $\pi$ that meets $U$ meets it in exactly one $Q$-orbit;

    \item[(ii)] there exist an isomorphism $\mu\colon Q\to N$ and an isomorphism $\xi\colon Z\to\overline X$ such that $\xi(gz)=\mu(g)\xi(z)$ for all $g\in Q,\,z\in Z$, and such that the following diagram commutes:   
    $$\begin{array}{ccc}
Z                   & \xrightarrow{\quad\xi\quad}      & \overline{X} \\
{\pi}\big\downarrow &                                  & \big\downarrow{q} \\
Z/\!\!/Q            & \xrightarrow{\quad\varphi\quad } & X
\end{array}$$ 

\end{enumerate}

More on Cox rings can be found here \cite{CoxB}.

\section{Main result}
\label{subsec3}

Fix a natural number $n$ and consider the $(n+3)$-dimensional affine variety $$X=\{x^2y+z^3w^2u_1^2\cdots u_n^2=1\}\subseteq\mathbb{A}^{n+4}.$$

Define an action of the torus $\mathbb{G}_m$ on $X$, induced by the grading:
\begin{gather*}
    \deg x=1, \quad \deg y=-2, \quad \deg z=2, \\
    \deg w=-2, \quad \deg u_1=-1, \quad \deg u_i=0 \quad (i=2, \dots, n).
\end{gather*} 

The main result of the paper is the following theorem.

\begin{theorem}\label{mainthm}
For every $n\geqslant 1$, the affine variety $Y=X/\!\!/\mathbb{G}_m$ is smooth, generically flexible, but not flexible. Moreover, $Y$ admits a nontrivial $\operatorname{SAut}(Y)$-invariant divisor.
\end{theorem}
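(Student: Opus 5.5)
The plan is to first check that $X$ is smooth and that $\mathbb G_m$ acts freely on it, so that $Y=X/\!\!/\mathbb G_m$ is smooth. Write $F=x^2y+z^3w^2u_1^2\cdots u_n^2-1$. On $X$ the partial derivative $\partial F/\partial y=x^2$ vanishes only when $x=0$. In that case $z^3w^2u_1^2\cdots u_n^2=1$, so $z,w,u_i$ are all nonzero. Then $\partial F/\partial z=3z^2w^2u_1^2\cdots u_n^2\neq 0$, hence $X$ is smooth. The stabilizer of a point is trivial as soon as the coordinates that are nonzero there have degrees generating $\mathbb Z$. If $x\neq 0$, the degree $1$ coordinate is nonzero. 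If $x=0$, then $z,w,u_1$ are nonzero, and $\deg u_1=-1$. So the action is free. Since $\mathbb G_m$ is reductive and acts freely with closed orbits, Luna's slice theorem makes $\pi\colon X\to Y$ a principal $\mathbb G_m$-bundle. Closedness of orbits comes from the existence of monomials of both signs among the nonvanishing coordinates; the case $x\ne0$ needs a small check, for instance using $x^2y=1-(\dots)$. Consequently $Y$ is smooth of dimension $n+2$.

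Next I identify $\pi$ with the Cox realization of $Y$ via Proposition 2.3. This requires $\mathbb K[X]$ to be factorially $\mathbb Z$-graded. The ring $\mathbb K[X]$ is a trinomial-type ring, $x^2y+z^3w^2\prod u_i^2=1$, and I expect factoriality to follow from the standard criteria for such rings. Alternatively, localize at $x$: one has $\mathbb K[X]_x=\mathbb K[x^{\pm1},z,w,u]$, which is factorial, and then Nagata's criterion applies provided $x$ is prime. Modulo $x$ the ring becomes $\mathbb K[y,z,w,u]/(z^3w^2\prod u_i^2-1)$. This quotient is a domain: the polynomial $z^3w^2\prod u_i^2-1$ is irreducible because the exponent gcd is $1$. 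Condition (i) of Proposition 2.3 holds with $U=X$. Hence $\operatorname{Cl}(Y)\cong\mathbb Z$, $R(Y)=\mathbb K[X]$, and the grading is the Cox grading.

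For flexibility, I use the standard fact from \cite{AG} that every $\mathbb G_a$-action on $Y$ lifts to a degree-$0$ LND on $\overline Y=X$. Conversely, every $\mathbb G_m$-homogeneous LND of degree $0$ on $\mathbb K[X]$ descends to $Y$. First I compute such LNDs. For $u_i$ with $i\ge2$ and for $y$ there are explicit ones. One example is $\partial(y)=-2\cdot(\dots)$ combined with a replica-type construction. A cleaner choice is to use the divisibility: $D(u_i)=x^2 h$ and $D(y)=-\partial_{u_i}(z^3w^2\prod u_j^2)\,h$, with $h$ of appropriate degree in the kernel. Together with similar LNDs moving $z$, $w$, $u_1$ multiplied by $x^2$, these span the tangent space on $\{x\neq0\}$. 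This gives generic flexibility of $Y$, since the images of the corresponding vector fields span $T_yY$ at generic points.

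The main obstacle is the invariant divisor. I claim $D=\pi(\{x=0\})$ is $\operatorname{SAut}(Y)$-invariant. It suffices to show that every degree-$0$ homogeneous LND $\partial$ of $\mathbb K[X]$ satisfies $\partial(x)\in(x)$. I would attempt to show $x\in\operatorname{ML}$-type kernel, i.e.\ $\partial(x)=0$. Consider the possible degrees of kernels. The standard argument for $x^2y+\dots$ varieties uses that the exponent $2$ on $x$ and the exponents $3,2$ on $z,w$ are at least $2$. Applying $\partial$ to the relation, and using that LNDs on $\{x^2y+P=1\}$ with $P$ having all exponents $\ge2$ annihilate $x$, one concludes $\partial(x)=0$; this is the argument of the known Makar-Limanov-type computations for Danielewski/Koras–Russell analogues. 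In our setting the homogeneity of degree $0$ should be used, e.g.\ via a filtration/associated graded argument reducing to the trinomial $x^2y+z^3w^2\prod u_i^2$. Thus $\mathbb G_a$-flows fix $x$ and preserve $\{x=0\}$. Hence $D\subset Y$ is a nonempty proper $\operatorname{SAut}(Y)$-invariant divisor consisting of smooth points, so $Y$ is not flexible. This last step is where I expect the real work.
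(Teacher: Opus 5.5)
Your proposal is correct. For the decisive step it follows the paper's argument: identify $\pi$ with the Cox realization, lift each $\mathbb G_a$-action on $Y$ to a degree-zero LND of $\mathbb K[X]$, and use that every LND kills $x$. The step you flag as ``the real work'' is not proved in the paper either. It is Proposition~\ref{MX}, $\operatorname{ML}(X)=\mathbb K[x]$, taken directly from Proposition~2 of \cite{GTrinomial}: the equation is trinomial, $y$ enters linearly with $x$-exponent $2>1$, and all exponents of $z,w,u_i$ are at least $2$. That result covers all LNDs, not only homogeneous ones. So no homogeneity, filtration or associated-graded argument is needed; just cite it.

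You genuinely differ from the paper in three other places.

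\textbf{Smoothness.} The paper finds generators $A,\dots,U_n$ of $\mathbb K[X]^{\mathbb G_m}$, proves $\mathbb K[Y]\cong\mathbb K[A,\dots,U_n]/(f_1,f_2,f_3)$, and checks the Jacobian rank (Lemmas~\ref{Rank}, \ref{Invts}). You instead check that $X$ is smooth and the action is free with closed orbits, so Luna's slice theorem makes $\pi$ a principal $\mathbb G_m$-bundle and $Y$ is smooth. Closedness of orbits is in fact automatic: the boundary of a non-closed $\mathbb G_m$-orbit would consist of fixed points. Your route is shorter and avoids the irreducibility and dimension count in Lemma~\ref{Invts}. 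The paper's route, in exchange, gives an explicit model of $Y$ in $\mathbb A^{n+4}$, which it reuses in Lemma~\ref{GFlex}.

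\textbf{Factoriality.} Your Nagata argument uses $\mathbb K[X]_x=\mathbb K[x^{\pm1},z,w,u_1,\dots,u_n]$, with $x$ prime because $z^3w^2u_1^2\cdots u_n^2-1$ is irreducible. It replaces the Hausen--Wrobel criterion with a self-contained argument. It also shows $\{x=0\}$ is irreducible, so $\mathcal D$ is a prime divisor.

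\textbf{Generic flexibility.} Your LNDs $D(v)=x^2h$, $D(y)=-\partial_v(m)\,h$, with $m=z^3w^2u_1^2\cdots u_n^2$ and $\deg h=\deg v-2$, are exactly the paper's $\partial_z,\partial_w,\partial_1,\partial_i$ (take $h=1,\,u_1^4,\,xw^2,\,w$). One statement needs correcting: these fields do not span $T_pX$, since they all kill $x$. What holds is this: at a point with $xwu_1\neq0$, their span is an $(n+2)$-dimensional subspace that does not contain the orbit direction, whose $\partial/\partial x$-component is $x\neq0$. Hence $d\pi_p$ maps this span onto $T_{\pi(p)}Y$. The paper makes the equivalent check downstairs via the minor $M_0=-A^{n+2}D^4$.
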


To prove this theorem we need a number of auxiliary statements.

\begin{proposition}\label{MX}
 $\operatorname{ML}(X)=\mathbb{K}[x].$
\end{proposition}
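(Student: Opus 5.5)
We prove the two inclusions $\operatorname{ML}(X)\subseteq\mathbb K[x]$ and $\mathbb K[x]\subseteq\operatorname{ML}(X)$ separately. Write $A=\mathbb K[X]=\mathbb K[x,y,z,w,u_1,\dots,u_n]/(x^2y+z^3w^2u_1^2\cdots u_n^2-1)$ and set $P=z^3w^2u_1^2\cdots u_n^2$.

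\emph{Upper bound.} We exhibit explicit locally nilpotent derivations whose common kernel is $\mathbb K[x]$. For each variable $v\in\{z,w,u_1,\dots,u_n\}$ let $D_v$ be the derivation with
$$D_v(x)=0,\qquad D_v(v)=x^2,\qquad D_v(y)=-\partial P/\partial v,$$
and $D_v=0$ on the other variables. It kills $x^2y+P-1$, since $x^2D_v(y)+\partial P/\partial v\cdot x^2=0$. It is triangular, because $D_v(y)$ is a polynomial in $v$ and in variables killed by $D_v$, so $D_v$ is locally nilpotent. The intersection $\operatorname{Ker}D_z\cap\operatorname{Ker}D_w\cap\bigcap_i\operatorname{Ker}D_{u_i}$ consists of functions not depending on $z,w,u_i$. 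To make this precise, localize at $x$: $A_x=\mathbb K[x,x^{-1},z,w,u_1,\dots,u_n]$ with $y=(1-P)/x^2$. There $D_v=x^2\partial/\partial v$, so the common kernel in $A_x$ is $\mathbb K[x,x^{-1}]$. Intersecting with $A$ gives $\mathbb K[x]$, because $x^{-1}\notin A$: the function $x$ vanishes on the nonempty set $\{x=0,\ P=1\}$.

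\emph{Lower bound (main obstacle).} We must show that every $D\in\operatorname{LND}(A)$ satisfies $D(x)=0$. The plan follows the standard approach for Danielewski-type varieties $x^my=f(\bar z)$. First, $x$ is a unit on the open set $\{x\neq0\}$, and the fibre $\{x=0\}$ is $\{P=1\}\times\mathbb A^1_y$, which is not simply connected because $P=1$ has units $z,w,u_i$. Passing to an associated graded LND should make this rigorous. Put the filtration on $A$ with $\deg x=-1$, $\deg y=2$, and all other variables of degree $0$. Then $\operatorname{gr}A\cong\mathbb K[x,y,z,w,u]/(x^2y+P-1)$ is homogeneous. Alternatively, use $\deg x=1$, $\deg y=-2$, so that the relation becomes homogeneous of degree $0$ except for the $-1$ term, whose degree is also $0$. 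The key point is that in $\operatorname{gr}A$, or equivalently in $A$ by the Makar-Limanov/Derksen homogenization argument for the $\mathbb G_m$-grading $\deg x=1$, $\deg y=-2$, one obtains a nonzero homogeneous LND $\delta$.

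Suppose $\delta(x)\neq0$. Then on the zero-degree subring $A_0\ni z,w,u_i$ we have $\delta(x)\in\mathbb K[x]$-multiples. One then uses that $z,w,u_i$ are units modulo $x$: from $P\equiv1 \pmod{x^2}$, each of $z,w,u_i$ is invertible in $A/(x)$. Units of $A/(x)$ that are nonconstant must lie in the kernel of the induced LND on $A/(x)$ when $x\in\operatorname{Ker}$. If instead $x\notin\operatorname{Ker}\delta$, we derive a contradiction via the standard lemma: if $x^2y=1-P$ and $\delta$ is an LND with $\deg_\delta x>0$, then $\deg_\delta(x^2y)\ge 2$, while $1-P$ is a product of units mod $x$ forcing $\deg_\delta(P)=0$. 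This gives $\deg_\delta y<0$, a contradiction.

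The technically hardest step is choosing the filtration so that $\operatorname{gr}A$ is a domain, and checking the degree-function bookkeeping $\deg_\delta$, which counts $\delta$-degree, with the fact that $z,w,u_i$ become units modulo $x$, hence lie in $\operatorname{ML}$ of the fibre. We expect to need that $\operatorname{Ker}\delta$ is factorially closed, so that $z,w,u_i\in\operatorname{Ker}\delta$ whenever $P\in\operatorname{Ker}\delta$. Then $x^2y\in\operatorname{Ker}\delta$ forces $x\in\operatorname{Ker}\delta$, and therefore $x\in\operatorname{Ker}D$.
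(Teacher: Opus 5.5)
Your upper bound $\operatorname{ML}(X)\subseteq\mathbb K[x]$ is correct and self-contained. The triangular derivations $D_v$ kill the relation and become $x^2\partial/\partial v$ on $A_x=\mathbb K[x^{\pm1},z,w,u_1,\dots,u_n]$, and $A\cap\mathbb K[x^{\pm1}]=\mathbb K[x]$. The gap is the reverse inclusion: you have not shown that every $D\in\operatorname{LND}(A)$ kills $x$, and this is the real content of the proposition. The paper does not prove this step directly. It obtains it from Proposition~2 of \cite{GTrinomial}, whose hypotheses are that $x$ occurs with exponent $\geqslant 2$ next to the linear variable $y$, and that every exponent in $z^3w^2u_1^2\cdots u_n^2$ is $\geqslant 2$.

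Your sketch of this inclusion fails in three places.

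First, for $\deg x=\mp1$, $\deg y=\pm2$ the relation is already homogeneous, so $\operatorname{gr}A=A$. The filtration therefore gives nothing beyond taking an extreme homogeneous component $\delta$ of $D$. But $D(x)\neq0$ does not imply $\delta(x)\neq0$. Conversely, proving $\delta(x)=0$ for homogeneous $\delta$ does not carry back to $D$: you only have $\overline{\operatorname{Ker}D}\subseteq\operatorname{Ker}\delta$, which points the wrong way. So your final ``therefore $x\in\operatorname{Ker}D$'' is unsupported.

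Second, using units of $A/(x)$ requires $\delta$ to descend to $A/(x)$, that is, $\delta(x)\in xA$. That is essentially what you are trying to prove, so the step is circular.

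Third, and decisively, the claim that ``$1-P$ is a product of units mod $x$, forcing $\deg_\delta P=0$'' is false as a general principle. Take $x^2y+zw^2=1$, where $z,w$ are also units modulo $x$ and $x$ again has exponent $2$. The derivation with $\delta(x)=w^2$, $\delta(z)=-2xy$, $\delta(y)=\delta(w)=0$ kills the relation. It is locally nilpotent and homogeneous (of degree $-1$) for your grading. Yet $\deg_\delta x=1$ and $\deg_\delta(zw^2)=2$. Any valid argument must therefore use that every exponent in $P$ is $\geqslant 2$, which is exactly what excludes this example, and your sketch never uses it. Without that input, either through a genuine Makar-Limanov-type analysis or by citing the trinomial result as the paper does, the inclusion $\mathbb K[x]\subseteq\operatorname{ML}(X)$ is not established.
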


\begin{proof}
The equation of $X$ is trinomial, and in the monomial $z^3w^2u_1^2\cdots u_n^2$ all exponents are at least two, while $y$ enters $x^2y$ linearly with the degree of $x$ greater than one. Applying Proposition 2 of \cite{GTrinomial} gives the required equality. This proves Proposition \ref{MX}.
\end{proof}

Note that the action of $\mathbb {G}_m$ on $X$ is free. Let a point $p=(x,\dots,u_n)\in X$ be fixed by an element $t\in\mathbb {G}_m$. If $x\neq0$, then $tx=x$, so $t=1$. If $x=0$, then $z^3w^2u_1^2\cdots u_n^2=1$, so $z,u_1\neq0$. From $t^2z=z$ and $t^{-1}u_1=u_1$ we get $t=1$. Hence the action is free.

\begin{lemma}\label{Fact}
The variety $X$ is factorial.    
\end{lemma}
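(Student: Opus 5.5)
To prove that $X$ is factorial I will show that $A=\mathbb K[X]=\mathbb K[x,y,z,w,u_1,\dots,u_n]/(x^2y+z^3w^2u_1^2\cdots u_n^2-1)$ is a UFD. The tool is Nagata's criterion: if $A$ is a noetherian domain, $f\in A$ is a prime element, and the localization $A_f$ is a UFD, then $A$ is a UFD. I take $f=x$.

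\emph{Step 1: $A$ is a domain.} The polynomial $F=x^2y+m-1$, with $m=z^3w^2u_1^2\cdots u_n^2$, has degree one in $y$. Its $y$-coefficient is $x^2$, and $x^2$ and $m-1$ have no common factor in $\mathbb K[x,z,w,u]$. Hence $F$ is irreducible, and $A$ is a domain.

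\emph{Step 2: $x$ is prime in $A$.} We have
$$A/(x)\cong\mathbb K[y,z,w,u_1,\dots,u_n]/(z^3w^2u_1^2\cdots u_n^2-1).$$
The polynomial $y$ is a free variable here. So it suffices to check that $P=z^3w^2u_1^2\cdots u_n^2-1$ is irreducible in $\mathbb K[z,w,u]$. This is the step that needs an argument, since $P$ is a monomial minus one whose exponent vector $(3,2,\dots,2)$ is not divisible by $2$.

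A binomial $\mathbf t^{a}-1$ is irreducible exactly when $\gcd(a)=1$. Here $\gcd(3,2,\dots,2)=1$, so $P$ is irreducible. To see this concretely, change coordinates on the torus so that the exponent vector becomes primitive $(1,0,\dots,0)$. Alternatively, argue directly: a factorization $P=GH$ restricted to the torus would give $\mathbf t^a-1=GH$ with both factors binomial-like. A Newton polytope argument then shows that each factor has Newton polytope a segment along $a$, which forces $a$ to be a proper multiple of a lattice vector, contradicting primitivity. Either way $A/(x)$ is a domain.

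\emph{Step 3: $A_x$ is a UFD.} In $A_x$ we can solve for $y$:
$$y=(1-z^3w^2u_1^2\cdots u_n^2)/x^2.$$
Therefore
$$A_x\cong\mathbb K[x,x^{-1},z,w,u_1,\dots,u_n],$$
which is a localization of a polynomial ring and hence a UFD.

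Nagata's criterion now gives that $A$ is a UFD, i.e. $X$ is factorial. Since $X$ is also a smooth hypersurface (the partial derivative of $F$ in $y$ is $x^2$ and in $z$ is $3z^2w^2\cdots$, and these vanish simultaneously with $F$ nowhere), normality is automatic. The main obstacle is Step 2, the primality of $x$, i.e. the irreducibility of the binomial $z^3w^2u_1^2\cdots u_n^2-1$. I will handle it via the primitive exponent vector argument.
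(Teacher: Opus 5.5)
Your proof is correct, but it takes a different route from the paper.

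The paper proves the lemma in one line. It identifies $\mathbb K[X]$ as a trinomial algebra in the sense of Hausen--Wrobel's Construction~1.1, with exponents $l_0=(2,1)$ and $l_1=(3,2,\dots,2)$, and invokes their factoriality criterion.

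You instead apply Nagata's criterion to the element $x$, and your argument splits cleanly:
\begin{itemize}
\item Primality of $x$ reduces to irreducibility of the binomial $z^3w^2u_1^2\cdots u_n^2-1$. This is exactly where $\gcd(3,2,\dots,2)=1$ is used.
\item Because $y$ occurs linearly with coefficient $x^2$, you get $\mathbb K[X]_x\cong\mathbb K[x^{\pm1},z,w,u_1,\dots,u_n]$, which is trivially factorial.
\end{itemize}

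Comparing the two: your argument is elementary and self-contained, and it shows which features of the equation cause factoriality. The paper's citation is shorter and places $X$ in a general family. However, it depends on matching the equation to Hausen--Wrobel's conventions and the exact hypotheses of their criterion, which the paper checks only tersely.

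Two small remarks on your write-up:
\begin{itemize}
\item In the torus argument of Step~2, add the easy step back from the Laurent ring to the polynomial ring. A factor that becomes a unit in $\mathbb K[z^{\pm1},w^{\pm1},u^{\pm1}]$ is a scalar times a monomial. A monomial dividing a polynomial with nonzero constant term must be constant.
\item The closing remark on smoothness and normality is not needed, since a UFD is automatically integrally closed.
\end{itemize}
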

    
\begin{proof}
The coordinate ring of $X$ is
$$R=\mathbb{K}[x,y,z,w,u_1,\ldots,u_n]\big/(x^2y+z^3w^2u_1^2\cdots u_n^2-1),$$
and it is a trinomial algebra of type $I$ in the sense of Construction 1.1 of \cite{HausenWrobel}.

For the corresponding exponents $l_0=(2,1),\; l_1=(3,2,2,\ldots,2)$ we have $\gcd(l_{ij})=1$. By the criterion of \cite{HausenWrobel}, $R$ is factorial, hence $X$ is factorial. 
\end{proof}

\begin{proposition}\label{CoxR}
The quotient morphism $X\to Y$ is the Cox realization of the variety $Y$.
\end{proposition}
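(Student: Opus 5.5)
The plan is to apply Proposition 2.3 of \cite{AG} with $Z=X$, $Q=\mathbb G_m$ and $\varphi=\mathrm{id}\colon Y\to X/\!\!/\mathbb G_m$, and to verify condition (i). Two preliminary hypotheses are needed first. By Lemma~\ref{Fact}, $\mathbb K[X]$ is factorial, so it is in particular factorially graded with respect to the $\mathbb Z$-grading. We also have to check that $\mathbb K[X]$ has no nonconstant homogeneous units. The units of $\mathbb K[X]$ should be only constants: $X$ is a trinomial hypersurface of type I whose units are known to be trivial, and one can also see this because $\mathbb K[X]$ is an integral domain of the form in \cite{HausenWrobel}. Next, for $Y$ to have a Cox ring we need $\mathbb K[Y]^\times=\mathbb K^\times$, and this follows since $\mathbb K[Y]\subseteq\mathbb K[X]$. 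We also need $\operatorname{Cl}(Y)$ to be finitely generated; this will follow from the isomorphism itself, or one can argue directly through the quotient.

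For condition (i) we take $U=X$. The action of $\mathbb G_m$ on $X$ is free, as was shown just before Lemma~\ref{Fact}, and the codimension condition is then vacuous. The remaining requirement is that every fiber of $\pi$ consists of exactly one $\mathbb G_m$-orbit. Since every point has trivial stabilizer, every orbit is one-dimensional. I expect to show that all orbits are closed. Each fiber of $\pi$ contains a unique closed orbit, and every orbit in that fiber has the closed orbit in its closure. If all orbits are closed, each fiber is therefore a single orbit.

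To show that orbits are closed, I use the Hilbert--Mumford type criterion for $\mathbb G_m$. The orbit of $p$ is non-closed exactly when the limit of $t\cdot p$ exists in $X$ as $t\to0$ or as $t\to\infty$. At a point with $x\ne0$ and $y\ne0$ the coordinates have weights $1$ and $-2$, so neither limit exists. If $x=0$, then $z,w,u_1\neq0$ by the equation, and these carry weights of both signs ($2$, $-2$, $-1$), so again neither limit exists. If $y=0$, then $z,u_1\ne0$, whose weights are $2$ and $-1$, which have opposite signs. In every case the point has nonzero coordinates of both positive and negative weight, so $t\cdot p$ diverges in both directions and the orbit is closed. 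Hence the quotient is geometric, and condition (i) holds with $U=X$.

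Proposition 2.3 then gives an equivariant isomorphism $X\cong\overline Y$ that is compatible with the quotient maps, which is exactly the statement. The main obstacle is the bookkeeping of the hypotheses on $Y$ needed for the Cox ring to be defined, namely trivial units and finitely generated class group. I expect these to be handled by the remark that $\operatorname{Cl}(Y)$ is identified with the character group $\mathbb Z$ for free quotients of factorial varieties without nonconstant invertible functions, as in \cite{AG} and \cite{CoxB}.
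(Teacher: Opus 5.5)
Your proposal is correct and follows the paper's route: Proposition~2.3 of \cite{AG} with $U=X$, freeness of the $\mathbb G_m$-action, and closedness of all orbits, so that each fiber of $\pi$ is a single orbit. The only real difference is how closedness is justified: you derive it directly from the signs of the weights of the nonzero coordinates, while the paper deduces it from all orbits having dimension $1$ (the boundary of an orbit would consist of lower-dimensional orbits, i.e.\ fixed points, and there are none).

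There are two small bookkeeping points. First, you omit normality of $Y$, which the paper checks explicitly: invariants of the integrally closed ring $\mathbb K[X]$ are integrally closed. Second, your justification of $\mathbb K[X]^\times=\mathbb K^\times$ is vague. It follows cleanly from Proposition~\ref{MX}: a unit and its inverse both lie in the kernel of every locally nilpotent derivation, hence in $\mathbb K[x]$, so the unit must be constant.
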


\begin{proof}
Since $X$ is factorial, it is normal. The weight group of the grading on $\mathbb K[X]$ is $\mathbb Z$, so $\mathbb K[X]$ is factorially graded with respect to the action of $\mathbb G_m$.

The subalgebra of invariants $\mathbb K[Y]=\mathbb K[X]^{\mathbb G_m}$ of the integrally closed ring $\mathbb K[X]$ is also integrally closed, since an element $\xi\in\operatorname{Frac}(\mathbb K[Y])$ that is integral over $\mathbb K[Y]$ is also integral over $\mathbb K[X]$, so $\xi\in\mathbb K[X]\cap\operatorname{Frac}(\mathbb K[Y])=\mathbb K[Y]$. Hence $Y$ is normal.

Let us check condition (i) of Proposition 2.3 from \cite{AG}. Set $U=X$. Then $U$ is open, $\mathbb G_m$-invariant, and $\operatorname{codim}_X(X\setminus U)\geqslant 2$. 
Since the action of $\mathbb G_m$ on $X$ is free, for every point $p\in X$
$$\dim(\mathbb G_m p)=\dim\mathbb G_m=1.$$
All orbits have the same dimension and are closed. Hence every fiber of the morphism $\pi\colon X\to Y$ consists of a single $\mathbb G_m$-orbit, and condition (i) holds.

By Proposition~2.3 from \cite{AG}, the quotient morphism $X\to Y$ is the Cox realization of $Y$. This proves Proposition~\ref{CoxR}.

\end{proof}

Consider the following invariants of weight zero in $\mathbb{K}[X]$:
$$A=x^2w,\quad B=zw,\quad C=yz,\quad D=xu_1,\quad E=zu_1^2,\quad U_i=u_i\quad(i=2,\dots,n).$$

Denote $P = U_2^2\cdots U_n^2$ and set 
$$f_1=AE-BD^2,\qquad f_2=B-AC-EB^3P,\qquad f_3=E-CD^2-E^2B^2P,$$

satisfying the identities
$$Bf_3-Cf_1-Ef_2=0,\qquad Af_3-D^2f_2-(1-EB^2P)f_1=0. \eqno{(*)}$$

The Jacobian matrix $J$ of the system $\{f_1,f_2,f_3\}$ with respect to the variables $A,B,\dots,U_n$ is:

$$J=\begin{pmatrix}
E & -2BD & 0 & -2BD & A & \dots & 0 \\
-C & 1-3EB^2P & -A & 0 & -B^3P & \dots & -2EB^3P/U_n \\
0 & -2E^2BP & -D^2 & -2CD & 1-2EB^2P & \dots & -2E^2B^2P/U_n
\end{pmatrix}.$$

\begin{lemma}\label{Rank}
The rank of the matrix $J$ equals $2$ at every point of the set $V(f_1,f_2,f_3)$.
\end{lemma}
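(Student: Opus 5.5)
Let $\nabla f_i$ denote the $i$-th row of $J$. The proof splits into an upper bound, $\operatorname{rank} J\leqslant 2$ on $V(f_1,f_2,f_3)$, and a lower bound, $\operatorname{rank} J\geqslant 2$ there. Note first that $J$ as displayed is really the Jacobian restricted to the locus where all $U_i\neq 0$ (because of the $1/U_n$ entries). Either I will interpret the entries $-2EB^3P/U_i$ as the polynomials $-2EB^3U_2^2\cdots U_i\cdots U_n^2$, or I will note that the partial derivatives are polynomial. Either way, the argument must cover points with some $U_i=0$ as well.

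\textbf{Upper bound.} I differentiate the identities $(*)$ with respect to each variable and restrict to $V(f_1,f_2,f_3)$. There all terms containing an undifferentiated $f_i$ vanish, and I obtain two linear relations among the rows:
$$B\,\nabla f_3-C\,\nabla f_1-E\,\nabla f_2=0,\qquad A\,\nabla f_3-D^2\,\nabla f_2-(1-EB^2P)\,\nabla f_1=0 .$$
If at some point of $V$ one of the coefficient vectors $(-C,-E,B)$ or $(-(1-EB^2P),-D^2,A)$ is nonzero, the rows are dependent there and $\operatorname{rank}J\leqslant 2$. Both vectors vanish only if $B=C=E=A=D=0$ and $1-EB^2P=0$. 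But $E=0$ gives $1-EB^2P=1\neq0$, a contradiction. So the upper bound holds everywhere on $V$.

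\textbf{Lower bound.} I exhibit a nonzero $2\times2$ minor at every point of $V$, using the columns for $A,B,C,D,E$. The key observation is the column $\partial/\partial C=(0,-A,-D^2)^{T}$ together with the column $\partial/\partial E=(A,-B^3P,1-2EB^2P)^{T}$. I split into cases.
\begin{enumerate}
\item[(a)] If $E\neq0$: the minor on rows $1,2$ and columns $A,C$ equals $-EA$, and the minor on rows $1,3$ and columns $A,C$ equals $-ED^2$. So the rank is $2$ unless $A=D=0$. In that subcase $f_3=0$ gives $E(1-EB^2P)=0$, so $1-EB^2P=0$. Then the minor on rows $1,3$ and columns $A,E$ is $E(1-2EB^2P)=-E\neq0$.
\item[(b)] If $E=0$: $f_3=0$ gives $CD^2=0$, and $f_1=0$ gives $BD^2=0$, while $f_2=B-AC=0$. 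Here I use the minor on rows $2,3$ and columns $C,E$, which equals $-A+ D^2B^3P$. I also use the minor on rows $1,3$ and columns $A,E$, which equals $E(\dots)-0$ and vanishes, so instead I use columns $E$ and $C$ against row $1$.
\begin{enumerate}
\item[(b1)] If $D\neq0$: then $B=C=0$. The row $\nabla f_3$ has entry $-2CD=0$ but entry $1$ in the $E$-column, while $\nabla f_1=(0,0,0,0,A,\dots)$ and $\nabla f_2=(-C,1,-A,0,\dots)$. Rows $2,3$ with columns $B,E$ give $1\cdot1-(-B^3P)\cdot 0=1\neq0$.
\item[(b2)] If $D=0$: $\nabla f_2$ has $1$ in the $B$-column (since $E=0$), and $\nabla f_3$ has $0$ in the $B$-column and $1$ in the $E$-column. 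The minor on rows $2,3$ and columns $B,E$ equals $1\neq0$.
\end{enumerate}
\end{enumerate}
In fact case (b) reduces uniformly to this last minor. When $E=0$, the $(2,B)$ entry is $1$, the $(3,B)$ entry is $0$, and the $(3,E)$ entry is $1$, so that minor is $1$ for all points with $E=0$.

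\textbf{Main obstacle.} I expect the main difficulty to be case (a) with $A=D=0$, which needs the equations of $V$ to force $EB^2P=1$. I also expect care to be needed in handling points where some $U_i=0$, which is dealt with by using polynomial partial derivatives throughout. Combining the two bounds gives rank exactly $2$ on $V(f_1,f_2,f_3)$.
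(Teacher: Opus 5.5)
Your proposal is correct and follows essentially the paper's route. You differentiate the identities $(*)$ to get the two row relations giving $\operatorname{rk}J\leqslant 2$, then exhibit a nonvanishing $2\times 2$ minor; only the case organization differs: your upper bound is uniform (since $E$ and $1-EB^2P$ cannot vanish together) and your lower bound splits on $E$ rather than on $A$ and $B$. The exploratory remarks in case (b) are superfluous and slightly off (for instance, the rows-$2,3$, columns-$(C,E)$ minor is $-A-B^3PD^2$ when $E=0$), but they are not needed, because the rows-$2,3$, columns-$(B,E)$ minor equals $1$ whenever $E=0$ and settles that case.
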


\begin{proof}
The last $n-1$ columns of $J$ are proportional, so the rank of $J$ equals the rank
of the matrix of columns $(A,B,C,D,E,U_2)$.

Differentiating $(*)$, we get the following relations among the rows of the matrix:
$$B\nabla f_3=C\nabla f_1+E\nabla f_2,\qquad\qquad A\nabla f_3=D^2\nabla f_2+(1-EB^2P)\nabla f_1.$$

Suppose $A\neq0$. The second equality expresses $\nabla f_3$ in terms of $\nabla f_1,\nabla f_2$, so $\operatorname{rk}J\leqslant2$. In the column $C$ the coordinates are $0$ and $-A$, and in the column $E$ they are $A$ and $0$, so the rows $\nabla f_1,\nabla f_2$ are independent, and $\operatorname{rk}J=2$.

Now let $A=0$. If $B\neq0$, the first equality expresses $\nabla f_3$ in terms of $\nabla f_1,\nabla f_2$, so $\operatorname{rk}J\leqslant2$. When $B\neq0$ we have $D=0$ and $E,P\neq0$. If $\operatorname{rk}J\leqslant1$, then the columns $A$ and $E$ give $EB^3P=0$, which is impossible. Hence $\operatorname{rk}J=2$.

So $B=0$. From $f_3=0$ we get $E=CD^2$, so $\nabla f_1=-D^2\nabla f_2$ and $\operatorname{rk}J\leqslant2$. If $\operatorname{rk}J\leqslant1$, then $\nabla f_3=\lambda\nabla f_2$ for some $\lambda$. In the column $B$ the coordinate of $\nabla f_2$ is $1$, and of $\nabla f_3$ is $0$, so $\lambda=0$ and $\nabla f_3=0$. But in the column $E$ the coordinate of $\nabla f_3$ is $1\neq0$, a contradiction. Hence $\operatorname{rk}J=2$. This proves Lemma \ref{Rank}.

\end{proof}

\begin{lemma}\label{Invts}
There is an isomorphism of $\mathbb K$-algebras
$$\mathbb{K}[X]^{\mathbb{G}_m}\cong \mathbb{K}[A,B,C,D,E,U_2,\dots,U_n] \big/ (f_1,f_2,f_3).$$
\end{lemma}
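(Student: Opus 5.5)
We prove the isomorphism in two steps. First we show that $A,B,C,D,E,U_2,\dots,U_n$ generate $\mathbb K[X]^{\mathbb G_m}$. Then we show that the kernel of the surjection
$$\Phi\colon \mathbb K[A,\dots,U_n]\to\mathbb K[X]^{\mathbb G_m}$$
is exactly $(f_1,f_2,f_3)$.

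\emph{Relations.} We first check that $f_1,f_2,f_3$ lie in $\ker\Phi$. For $f_1$ this is immediate: $AE=x^2zwu_1^2=BD^2$. For $f_2$ we have $AC=x^2yzw$ and $EB^3P=z^4w^3u_1^2\cdots u_n^2$, so $B-AC-EB^3P=zw(1-x^2y-z^3w^2u_1^2\cdots u_n^2)=0$ on $X$. For $f_3$, similarly $E-CD^2-E^2B^2P=zu_1^2(1-x^2y-z^3w^2\prod u_i^2)=0$.

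\emph{Generators.} The invariant ring is spanned by the classes of monomials $x^ay^bz^cw^du_1^e\prod_{i\geq 2}u_i^{k_i}$ with $a-2b+2c-2d-e=0$. Modulo the relation $x^2y=1-z^3w^2\prod u_i^2$ we may assume the exponent $a\le 1$ or $b=0$; this reduction preserves degree. We then run a combinatorial argument on the weight-zero semigroup. The monomials $x^2w$, $yz$, $zw$, $xu_1$, $zu_1^2$ are among its generators, and we claim they generate it together with $x^2y$ and $z^3w^2u_1^2$. Here $x^2y=1-EB^2P$ and $z^3w^2u_1^2=EB^2$ are already expressed in the new variables. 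To prove the claim, describe the Hilbert basis of the cone $\{a-2b+2c-2d-e=0\}\cap\mathbb Z_{\ge0}^6$. Positive weights come from $x$ (weight $1$) and $z$ (weight $2$); negative weights come from $y,w$ (weight $-2$) and $u_1$ (weight $-1$). The primitive balanced pairs are $x^2y$, $x^2w$, $xu_1$, $zy$, $zw$, $zu_1^2$. Any balanced monomial decomposes into these: match negative weights greedily, pairing $u_1$ with $x$ when possible. A leftover odd $x$ must then pair with a $u_1$, and a parity check handles the remaining cases.

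\emph{Kernel.} This is the main obstacle. We propose a dimension-and-primality argument.

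1. $\dim Y=\dim X-1=n+2$. The ideal $I=(f_1,f_2,f_3)$ lives in $n+4$ variables, so $V(I)$ should have dimension $n+2$. By Lemma~\ref{Rank}, the Jacobian has rank $2$ at every point of $V(I)$, so $V(I)$ is smooth of dimension $\ge n+2$ at each point. Projecting onto the coordinates $(A,D,U_2,\dots,U_n)$ and using the relations $(*)$ to solve locally, we get that $V(I)$ is a smooth equidimensional variety of dimension $n+2$ and $I$ is locally a complete intersection of two equations. Since the rank is $2$ everywhere, $I$ is radical by the Jacobian criterion, provided $\mathbb K[\dots]/I$ is Cohen--Macaulay. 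Alternatively, we show directly that $\mathbb K[\dots]/I$ is reduced with smooth components.

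2. Connectedness. We show $V(I)$ is irreducible by checking it is connected. Each of the charts $A\ne0$, $B\ne0$, and $\{A=B=0\}$ lies in the closure of the part where $A\ne0$. On the open set $A\neq0$ we can eliminate $E=BD^2/A$ and $C=(B-EB^3P)/A$, which shows this part is irreducible. So $\mathbb K[\dots]/I$ is a domain of dimension $n+2$.

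3. Conclusion. $\Phi$ induces a surjection between domains of the same dimension, so it is an isomorphism.

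The delicate points are step 2, showing that the locus $A=0$ does not form an extra component, and the radicality in step 1. For step 2 we would compute the dimension of $V(I)\cap\{A=0\}$ directly from the equations and verify that it is at most $n+1$.
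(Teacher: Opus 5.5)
Your proposal is correct and follows the paper's proof. It uses the same monomial factorization for the generators, the chart $A\neq0$ with $S_A\cong\mathbb K[A^{\pm1},B,D,U_2,\dots,U_n]$, smoothness from Lemma~\ref{Rank}, the estimate $\dim(V(I)\cap\{A=0\})\leqslant n+1$ to get density of $\{A\neq0\}$ and hence irreducibility, and the final surjection between $(n+2)$-dimensional domains.

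One correction to your step~1, a point the paper also leaves implicit. Rank $2$ of the Jacobian only gives $\dim_pV(I)\leqslant n+2$, not $\geqslant$. The lower bound comes from your remark that $(*)$ makes $I$ locally generated by two of the $f_i$: drop $f_3$ where $A\neq0$ or $B\neq0$, and drop $f_1$ where $1-EB^2P\neq0$, which covers $A=B=0$. After that every local ring is regular, so $S$ is reduced, with no Cohen--Macaulay proviso needed.
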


\begin{proof}
Consider the homomorphism sending the variables to the invariants:
$$\psi\colon\mathbb K[A,B,C,D,E,U_2,\dots,U_n]\longrightarrow\mathbb K[X]^{\mathbb G_m}.$$

Let us show that every monomial 
$$M=x^a y^b z^c w^d u_1^{m_1}u_2^{m_2}\cdots u_n^{m_n}$$
of weight zero can be expressed in terms of $A,B,C,D,E,U_2,\dots,U_n$. 
Since $u_i$ ($i\geqslant2$) are already invariant, it suffices to consider the monomial $M'=x^ay^bz^cw^du_1^{m_1}$ of weight $$a-2b+2c-2d-m_1=0.$$

We use induction on the degree $a+b+c+d+m_1$.

If $m_1>0$ and $a>0$, then $M'$ is divisible by $D=xu_1$, and the quotient has smaller degree.

If $m_1>0$ and $a=0$, then $m_1=2(c-b-d)$. If $c=0$, then $m_1\leqslant0$. Contradiction. So $c>0$, and $M'$ is divisible by $E=zu_1^2$.

If $m_1=0$, the weight condition becomes $a+2c=2b+2d$. Then:

\begin{enumerate}
\item if $d>0$ and $c>0$, then $M'$ is divisible by $B$;
\item if $d>0$ and $c=0$, then $a=2(b+d)\geqslant2$, and $M'$ is divisible by $A$;
\item if $d=0$ and $c>0$, then $b\neq0$, and $M'$ is divisible by $C$;
\item if $d=0$ and $c=0$, then $a=2b$, and $M'=(x^2y)^b$.
\end{enumerate}

In the first three cases the quotient has smaller degree, and the claim holds by the induction hypothesis. In the fourth case $x^2y=1-B^2EP$ is expressed in terms of $B,E,U_2,\dots,U_n$. \medskip

Denote $$S=\mathbb K[A,B,C,D,E,U_2,\dots,U_n]/(f_1,f_2,f_3).$$ Since $f_1,f_2,f_3\in\ker\psi$, the map $\psi$ induces a surjection $\overline\psi\colon S\to\mathbb K[X]^{\mathbb G_m}$.

Consider the localization $S_A$. From $f_1=0$ we get $E=BD^2/A$. From $f_2=0$ we get $C=(AB-B^4D^2P)/A^2$, and $f_3$ then vanishes identically.
Hence
$$S_A\cong\mathbb K[A^{\pm1},B,D,U_2,\dots,U_n],$$
and $S_A$ is an integral domain of dimension $n+2$.

Let us show that $V(f_1,f_2,f_3)$ is irreducible. By Lemma \ref{Rank}, this variety is smooth everywhere and of dimension $n+2$; in particular, exactly one irreducible component passes through each of its points.

Denote $\mathcal D_A:=\{A=0\}$. Let us show that $\dim(\mathcal D_A\cap V(f_1,f_2,f_3))\leqslant n+1$, so that $\operatorname{Spec}S_A=\{A\neq0\}\cap V(f_1,f_2,f_3)$ is dense in $V(f_1,f_2,f_3)$.

At $A=0$, the equations $f_1=0,f_2=0$ give $BD^2=0$ and $B(1-EB^2P)=0$.
Then:
\begin{enumerate}
\item if $B=0$, then $f_3=0$ gives $E=CD^2$;
\item if $B\neq0$, then $D=0$, $EB^2P=1$.
\end{enumerate}
In both cases $\dim(\mathcal D_A\cap V(f_1,f_2,f_3))\leqslant n+1$.

Take a point $q\in \mathcal D_A$. If $q\notin\overline{\operatorname{Spec}S_A}$, then
some neighborhood of $q$ lies in $\mathcal D_A$ and, by smoothness, has dimension $n+2$.
This contradicts the estimate above. Hence $q\in\overline{\operatorname{Spec}S_A}$. Since this
holds for every point of $\mathcal D_A$, the whole set $V(f_1,f_2,f_3)$ coincides with
the closure of $\operatorname{Spec}S_A$, that is, it is irreducible. Therefore $S$ is an integral
domain of dimension $n+2$.
Thus $\overline\psi$ is a surjective homomorphism between integral domains of dimension $n+2$, hence injective, and so it is an isomorphism. This proves Lemma \ref{Invts}.

\end{proof}

Denote $\mathcal{A}=\mathbb{K}[X]$. Consider $n+2$ homogeneous locally nilpotent derivations of degree $0$, corresponding to the variables $z, w, u_1, \dots, u_n$.

Their action is given by the following formulas:
$$\partial_z(x)=\partial_z(w)=\partial_z(u_i)=0,\qquad \partial_z(z)=x^2,\qquad \partial_z(y)=-3z^2w^2u_1^2P,$$
$$\partial_w(x)=\partial_w(z)=\partial_w(u_i)=0,\quad \partial_w(w)=x^2u_1^4,\quad \partial_w(y)=-2z^3wu_1^6P,$$
$$\partial_1(x)=\partial_1(z)=\partial_1(w)=\partial_1(u_j)=0\, (j>1),\quad \partial_1(u_1)=x^3w^2,\quad \partial_1(y)=-2xz^3w^4u_1P,$$
and for all $i=2,\dots,n$:
$$\partial_i(x)=\partial_i(z)=\partial_i(w)=\partial_i(u_j)=0\ (j\ne i),\quad \partial_i(u_i)=x^2w,\quad \partial_i(y)=-2z^3w^3u_1^2u_i^{-1}P.$$

\begin{lemma}\label{GFlex}
The variety $Y$ is generically flexible.
\end{lemma}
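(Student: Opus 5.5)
The plan is to show that at a generic point of $Y$ the tangent space is spanned by velocity vectors of $\mathbb G_a$-actions. Since generic flexibility is equivalent to the existence of a single flexible point \cite{5A}, it suffices to find one smooth point $y_0\in Y$ at which the tangent vectors to $\mathbb G_a$-orbits span $T_{y_0}Y$. The $\mathbb G_a$-actions come from the degree-$0$ homogeneous locally nilpotent derivations $\partial_z,\partial_w,\partial_1,\dots,\partial_n$ on $\mathcal A=\mathbb K[X]$. A homogeneous LND of degree $0$ commutes with the $\mathbb G_m$-action, so it preserves $\mathcal A^{\mathbb G_m}=\mathbb K[Y]$ (Lemma~\ref{Invts}). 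Its restriction is again locally nilpotent, and hence defines a $\mathbb G_a$-action on $Y$.

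I would compute these restrictions on the generators $A,B,C,D,E,U_2,\dots,U_n$ using the Leibniz rule. For example, $\partial_z(A)=0$, $\partial_z(B)=x^2w=A$, $\partial_z(D)=0$, $\partial_z(E)=x^2u_1^2=D^2$, and $\partial_z(C)=yx^2-3z^3w^2u_1^2P=(1-EB^2P)-3EB^2P$, with $\partial_z(U_i)=0$. Similarly:
\begin{itemize}
\item $\partial_w(A)=x^4u_1^4=D^4$, $\partial_w(B)=zx^2u_1^4=ED^2$, and $\partial_w$ kills $D,E,U_i$;
\item $\partial_1(D)=x^4w^2=A^2$, $\partial_1(E)=2zu_1x^3w^2=2ABD$, and $\partial_1$ kills $A,B,U_i$;
\item $\partial_i(U_i)=x^2w=A$, and $\partial_i$ kills $A,B,D,E$ and $U_j$ for $j\neq i$.
\end{itemize}
The $C$-components are then determined by the relations $f_1,f_2,f_3$. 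Each of these is a polynomial in the generators, which confirms that the derivations restrict.

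Next I would work on the open chart $\{A\neq0\}$. By the proof of Lemma~\ref{Invts}, $S_A\cong\mathbb K[A^{\pm1},B,D,U_2,\dots,U_n]$, so $A,B,D,U_2,\dots,U_n$ are local coordinates there and $\dim Y=n+2$. In these coordinates the vector fields at a point have the following components along $(\partial/\partial A,\partial/\partial B,\partial/\partial D,\partial/\partial U_2,\dots,\partial/\partial U_n)$:
\[
\partial_z\mapsto(0,A,0,0,\dots,0),\quad \partial_w\mapsto(D^4,ED^2,0,\dots,0),\quad \partial_1\mapsto(0,0,A^2,0,\dots,0),
\]
and $\partial_i$ has the single nonzero component $A$ in the $U_i$-slot. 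At a point with $A\neq0$ and $D\neq0$ the matrix formed by these $n+2$ vectors is triangular up to reordering, with determinant $\pm A\cdot D^4\cdot A^2\cdot A^{n-1}\neq0$. So the tangent space is spanned at every such point, and these points form a nonempty open set.

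Smoothness of $Y$ follows from Lemma~\ref{Rank} together with Lemma~\ref{Invts}, so these points are smooth, and hence they are flexible points. The main obstacle is the careful bookkeeping showing that the restricted derivations are expressed in the chosen chart coordinates. This is only needed at the level of the components in $A,B,D,U_i$, which are computed directly as above, so I expect it to be routine. The key point is the nonvanishing of $D^4$ in $\partial_w(A)$, which supplies the missing $\partial/\partial A$ direction.
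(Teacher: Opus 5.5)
Your proof is correct and follows the paper's argument. You use the same degree-zero derivations $\partial_z,\partial_w,\partial_1,\dots,\partial_n$ restricted to $\mathbb K[Y]$, and the same minor in the columns $(A,B,D,U_2,\dots,U_n)$ with determinant $\pm A^{n+2}D^4$. The only difference is cosmetic. You read this minor in the chart $\mathbb K[Y]_A\cong\mathbb K[A^{\pm1},B,D,U_2,\dots,U_n]$, where $A,B,D,U_i$ are honest coordinates, and so you get flexibility on the whole open set $\{AD\neq0\}$ directly. The paper instead evaluates at the single point $\pi(1,1,0,1,\dots,1)$, uses $\dim T_qY=n+2$, and then invokes the one-point criterion for generic flexibility.
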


\begin{proof}
The derivations $\partial_z,\partial_w,\partial_1,\dots,\partial_n$ preserve $\mathcal A_0=\mathbb K[Y]$ and induce $\overline\partial_z,\overline\partial_w,
\overline\partial_1,\dots,\overline\partial_n\in\operatorname{LND}(\mathcal A_0)$. 

Consider the matrix $M$ of values of these derivations on $(A,B,C,\dots,U_n)$:
$$M=\begin{pmatrix}
0 & A & 1-4B^2EP & 0 & D^2 & 0 & \dots & 0\\
D^4 & D^2E & -2BE^3P & 0 & 0 & 0 & \dots & 0\\
0 & 0 & -2B^4DP & A^2 & 2ABD & 0 & \dots & 0\\
0 & 0 & -2B^3EU_2^{-1}P & 0 & 0 & A & \dots & 0\\
\vdots & \vdots & \vdots & \vdots & \vdots & \vdots & \ddots & \vdots\\
0 & 0 & -2B^3EU_n^{-1}P & 0 & 0 & 0 & \dots & A
\end{pmatrix}.$$

Denote by $M_0$ the minor corresponding to the columns $(A,B,D,U_2,\dots,U_n)$. Expanding along the last $n-1$ rows, we get 
$$M_0=A^{n-1}\det\begin{pmatrix}0&A&D^2\\D^4&D^2E&0\\0&0&A^2\end{pmatrix}=-A^{n+2}D^4.$$

Take the point $p=(1,1,0,1,1,\dots,1)\in X$, at which $A(p)=D(p)=1\neq0$. Then at the point $q=\pi(p)\in Y$ we have $M_0(q)=-1\neq0$, that is, $\operatorname{rk}M(q)=n+2=\dim T_qY$. Hence $\overline\partial_z(q),\dots,\overline\partial_n(q)$ form a basis of the space $T_qY$, and $q$ is a flexible point. By \cite{5A}, $Y$ is generically flexible. This proves Lemma \ref{GFlex}.

\end{proof}

Define the divisor $\mathcal{D}$ as the image of the invariant subset $\{x=0\}\subset X$
under the map $\pi\colon X\to Y$.

\begin{proposition}\label{Div}
The divisor $\mathcal D$ is $\operatorname{SAut}(Y)$-invariant.
\end{proposition}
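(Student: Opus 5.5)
The plan is to lift $\mathbb G_a$-actions from $Y$ to $X$ through the Cox realization, and then to use the Makar-Limanov invariant of $X$. Since $\operatorname{SAut}(Y)$ is generated by $\mathbb G_a$-subgroups, i.e. by the flows $\exp(t\delta)$ of $\delta\in\operatorname{LND}(\mathbb K[Y])$, it suffices to show that every such $\delta$ preserves the ideal of $\mathcal D$. Equivalently, on the level of $X=\overline Y$, the pullback of that ideal to $\mathbb K[X]$ should be $\delta'$-stable for a suitable lift $\delta'$.

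The first step is the lifting. By Proposition~\ref{CoxR}, $\pi\colon X\to Y$ is the Cox realization, with $\operatorname{Cl}(Y)\cong\mathbb Z$ and $N=\mathbb G_m$. By the lifting result of Arzhantsev--Gaifullin \cite{AG}, every $\delta\in\operatorname{LND}(\mathbb K[Y])$ lifts to a homogeneous locally nilpotent derivation $\widehat\delta$ of degree $0$ on $\mathbb K[X]$ with $\widehat\delta|_{\mathbb K[Y]}=\delta$. If I do not want to cite this, I would argue directly: the $\mathbb G_a$-action on $Y$ lifts to the total coordinate space, since $\mathbb G_a$ has no characters and acts trivially on $\operatorname{Cl}(Y)$, and the lifted action commutes with $\mathbb G_m$. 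I expect this to be the main obstacle, because the lemma must be quoted precisely. The key hypotheses are that $Y$ is normal with $\mathbb K[Y]^\times=\mathbb K^\times$ and that the Cox ring is finitely generated, which is here $\mathbb K[X]$.

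The second step applies Proposition~\ref{MX}. It gives $\widehat\delta(x)=0$, so $x\in\operatorname{Ker}\widehat\delta$. Hence the ideal $x\mathbb K[X]$ is $\widehat\delta$-stable, and $\exp(t\widehat\delta)$ preserves the $\mathbb G_m$-invariant subset $\{x=0\}\subset X$.

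The third step descends this to $Y$. The flow $\exp(t\widehat\delta)$ commutes with $\mathbb G_m$ and covers $\exp(t\delta)$ on $Y$ via $\pi$. Therefore $\exp(t\delta)$ maps $\pi(\{x=0\})=\mathcal D$ into itself. Here $\mathcal D$ is closed: $\{x=0\}$ is a closed $\mathbb G_m$-invariant subset, and the good quotient $\pi$ maps such sets to closed sets. Since this holds for every $\mathbb G_a$-subgroup, $\mathcal D$ is invariant under the group they generate, namely $\operatorname{SAut}(Y)$.

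Finally, I would check that $\mathcal D$ is a genuine divisor. The set $\{x=0\}$ equals $\{z^3w^2u_1^2\cdots u_n^2=1\}$, which is irreducible of dimension $n+2$. The action on it is free, and all orbits are fibers of $\pi$, so $\mathcal D$ has dimension $n+1=\dim Y-1$. It is also nonempty and proper, hence nontrivial.
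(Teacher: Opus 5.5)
Your proposal is correct and follows the paper's proof. You lift each $\delta\in\operatorname{LND}(\mathbb K[Y])$ through the Cox realization $\pi\colon X\to Y$ to a degree-zero LND $\widehat\delta$ of $\mathbb K[X]$, use $\operatorname{ML}(X)=\mathbb K[x]$ (Proposition~\ref{MX}) to see that $\{x=0\}$ is preserved by $\exp(t\widehat\delta)$, and push this down via $\pi\circ\exp(t\widehat\delta)=\exp(t\delta)\circ\pi$; the remaining differences (citing \cite{AG} instead of \cite[Theorem 4.2.3.2(i)]{CoxB}, and additionally checking that $\mathcal D$ is closed and that $\{x=0\}$ is irreducible of dimension $n+2$) are cosmetic.
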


\begin{proof}
First we show that $\mathcal D$ is indeed a divisor on $Y$. The set $\{x=0\}$ has codimension 1 in $X$ and is $\mathbb G_m$-invariant. Since the action of $\mathbb G_m$ on $X$ is free, all its orbits have dimension $1$, so
$$\dim\mathcal D=\dim\{x=0\}-1=\dim Y-1.$$

Now we show that $\mathcal D$ is invariant. Since $\pi\colon X\to Y$ is a Cox realization of the variety $Y$ (Proposition \ref{CoxR}), by \cite[Theorem 4.2.3.2(i)]{CoxB}, for every $\delta_0\in\operatorname{LND}(\mathcal A_0)$ there exists $\delta\in\operatorname{LND}(\mathcal A)_0$ such that
$$\pi\circ\exp(t\delta)=\exp(t\delta_0)\circ \pi \qquad\text{for all }t.$$
It suffices to check invariance of $\mathcal D$ under each subgroup $\exp(t\delta_0)$. By Proposition \ref{MX}, the set $\{x=0\}$ is invariant under $\exp(t\delta)$. Therefore
$$\exp(t\delta_0)(\mathcal D) =\exp(t\delta_0)(\pi(\{x=0\})) =\pi(\exp(t\delta)(\{x=0\})) =\pi(\{x=0\})=\mathcal D.$$
Hence $\mathcal D$ is invariant under the whole group $\operatorname{SAut}(Y)$.
\end{proof}

We can now prove Theorem \ref{mainthm}. 

\begin{proof}
Smoothness of $Y$ follows from Lemma \ref{Rank} and Lemma \ref{Invts}. Generic flexibility of $Y$ is proved in Lemma \ref{GFlex}. By Proposition \ref{Div}, the divisor $\mathcal D\subset Y$ is invariant under $\operatorname{SAut}(Y)$, so the action of $\operatorname{SAut}(Y)$ on $Y$ is not transitive. By \cite{5A}, this is equivalent to $Y$ not being flexible. This proves Theorem \ref{mainthm}.
\end{proof}

\section*{Acknowledgements}
The author is deeply grateful to Sergei Alexandrovich Gaifullin, under whose supervision this work was carried out, for suggesting the problem and for many valuable discussions.

\end{document}